\newfont{\theoremfont}{cmssbx12 scaled 875}
\newtheoremstyle{Eins}{\topsep}{\topsep}{\itshape}{}{\theoremfont}{.}{5pt}{\thmname{#1}\thmnumber{ #2}\thmnote{ #3}}
\newtheoremstyle{Zwei}{\topsep}{\topsep}{}{}{\theoremfont}{.}{5pt}{\thmname{#1}\thmnumber{ #2}\thmnote{ #3}}
\theoremstyle{Zwei}
\newtheorem{thm}{Theorem}[section]
\newcommand{\Projeins}{\ensuremath{\operatorname{Proj}^{\,\operatorname{1}}}}
\newcommand{\ind}[1]{\operatorname{ind}_{#1}\nolimits}
\newcommand{\proj}[1]{\operatorname{proj}_{#1}\nolimits}
\newcommand{\Bigsum}[2]{\ensuremath{\mathop{\textstyle\sum}_{#1}^{#2}}}
\newcommand{\spacea}{\hspace{1.2pt}}
\begin{document}
\title{PLB-spaces of holomorphic functions with logarithmic growth conditions}
\author{S.-A.~Wegner\,$^{\flat}$}
\date{November 23, 2011}
\maketitle
\renewcommand{\thefootnote}{}
\hspace{-1000pt}\footnote{$^{\flat}$\hspace{1pt}Sven-Ake Wegner, Fachbereich C -- Mathematik und Naturwissenschaften, Arbeitsgruppe Funktionalanalysis, Bergische Universit\"at Wuppertal, Gau\ss{}\-stra\ss{}e 20, D-42097 Wuppertal, Germany, Phone:\spacea{}\spacea{}+49\spacea{}(0)\spacea{}202\spacea{}/\spacea{}439\spacea{}-\spacea{}2531, e-mail: wegner@math.uni-wuppertal.de.\vspace{3pt}}
\hspace{-1000pt}\footnote{\hspace{4.5pt}2010 \emph{Mathematics Subject Classification}: Primary 46E10; Secondary 46A13.}
\hspace{-1000pt}\footnote{\hspace{4.5pt}\emph{Key words and phrases}: PLB-space, derived projective limit functor, weighted space.}
\vspace{-45pt}
{\small
\begin{abstract}\noindent{}Countable projective limits of countable inductive limits, called PLB-spaces, of weighted Banach spaces of continuous functions have recently been investigated by Agethen, Bierstedt and Bonet. In a previous article, the author extended their investigation to the case of holomorphic functions and characterized when spaces over the unit disc w.r.t.~weights whose decay, roughly speaking, is neither faster nor slower than that of a polynomial are ultrabornological or barrelled. In this note, we prove a similar characterization for the case of weights which tend to zero logarithmically.
\end{abstract}}

\section{Introduction}\label{Introduction}
\vspace{-10pt}

PLB-spaces, i.e.~countable projective limits of countable inductive limits of Banach spaces arise naturally in analysis. The space of distributions, the space of real analytic functions and several spaces of ultradistributions and ultradifferentiable functions constitute prominent examples for spaces of this type. In fact, all these spaces turn out to be even PLS-spaces, which means that the linking maps in the inductive spectra of Banach spaces are not only continuous but even compact. During the last years the theory of PLS-spaces has played an important role in the application of abstract functional analytic methods to several classical problems in analysis. We refer to the survey article \cite{Domanski2004} of Doma\'{n}ski for applications, examples and further references.
\smallskip
\\A fundamental tool in the theory of PLS-spaces is the so-called first derived functor of the projective limit functor $\Projeins$. In the late sixties, Palamodov \cite{Palamodov1968} applied this concept from homological algebra to the theory of locally convex spaces. Since the mid eighties, Vogt \cite{VogtLectures} and many others intensified the research on this subject. We refer to the book of Wengenroth \cite{Wengenroth} for a systematic exposition of the theory and a detailed list of references. Among many other results, Vogt \cite{VogtLectures, Vogt1989}, see \cite[3.3.4 and 3.3.6]{Wengenroth}, proved that there is a connection between the vanishing of $\Projeins$ on a countable projective spectrum of LB-spaces and locally convex properties of the corresponding projective limit (e.g.~being ultrabornological or barrelled). In addition, Vogt \cite[Section 4]{Vogt1989} gave characterizations of the vanishing of $\Projeins$ and the forementioned properties in the case of sequence spaces. Recently, 
Agethen, Bierstedt, Bonet \cite{ABB2009} extended his results to weighted PLB-spaces of continuous functions. In \cite{WegnerJMAA} the author investigated weighted PLB-spaces of holomorphic functions, first on arbitrary balanced domains in $\mathbb{C}^d$ and then on the unit disc with certain technical assumptions on the weights which, roughly speaking, mean that the weights tend to zero like a polynomial.
\smallskip
\\In this article we restrict ourselves to the unit disc right from the beginning and consider weights which tend to zero logarithmically (see the remarks at the end of Section \ref{Notation:sec}). More formally, we consider weights satisfying the so-called condition (LOG) invented by Bonet, Engli\v{s}, Taskinen \cite{BoEnTa2005}. In Section \ref{Notation:sec} we state its definition and also the formal definition of the PLB-space $AH(\mathbb{D})$ which is the object of our study. In Section \ref{UaB} we recall several weight conditions due to Vogt \cite{Vogt1992} as well as variants introduced in \cite{WegnerJMAA}. Then we present the first result, Theorem \ref{LOG-main-result}, which provides necessary and sufficient conditions for the vanishing of $\Projeins$ and for $AH(\mathbb{D})$ being ultrabornological and barrelled. In Section \ref{Inter} we then treat the question of interchangeability of projective and inductive limit in the definition of $AH(\mathbb{D})$, where we also are able  to prove 
necessary and sufficient conditions, see Theorem \ref{Int-result}, using a condition due to Vogt \cite{Vogt1983} and a variant introduced in \cite{WegnerJMAA}. Since in all our results up to this point the necessary and the sufficient conditions differ, we next identify additional assumptions that allow to turn the results of the previous sections into characterizations, see Section \ref{FurtherAss}. In addition, at the end of Section \ref{FurtherAss} we make some remarks on the construction of examples.

\section{Notation}\label{Notation:sec}
\vspace{-10pt}

Let $\mathbb{D}$ denote the unit disc of the complex plane and $H(\mathbb{D})$ the space of all holomorphic functions on $\mathbb{D}$ endowed with the topology co of uniform convergence on the compact subsets. Let $\mathcal{A}=((a_{N,n})_{N\in\mathbb{N}})_{n\in\mathbb{N}}$ be a double sequence of strictly positive and continuous functions (weights) on $\mathbb{D}$ which is decreasing in $n$ and increasing in $N$, i.e.~$a_{N,n+1}\leqslant{}a_{N,n}\leqslant{}a_{N+1,n}$ holds for all $N$ and $n$; this condition will be assumed on the double sequence $\mathcal{A}$ in the remainder of this article. We define
\begin{align*}
Ha_{N,n}(\mathbb{D})&:=\{\,f\in H(\mathbb{D})\: ; \: \|f\|_{N,n}:=\sup_{z\in\mathbb{D}}a_{N,n}(z)|f(z)|<\infty\,\},
\end{align*}
which is a Banach space for the norm $\|\cdot\|_{N,n}$ whose closed unit ball we denote by $B_{N,n}$. By definition, $Ha_{N,n}(\mathbb{D})\subseteq Ha_{N,n+1}(\mathbb{D})$ holds with continuous inclusion for all $N$ and $n$ and we can define for each $N$ the weighted inductive limit
$$
\mathcal{A}_NH(\mathbb{D}):=\ind{n}Ha_{N,n}(\mathbb{D})
$$
which is a complete and hence regular LB-space by Bierstedt, Meise, Summers \cite[end of the remark after 1.13]{BMS1982}. For each $N$ we have $\mathcal{A}_{N+1}H(\mathbb{D})\!\subseteq\!\mathcal{A}_NH(\mathbb{D})$ with continuous inclusion. Hence, $\mathcal{A}H\!:=\!(\mathcal{A}_NH(\mathbb{D}))_{N\in\mathbb{N}}$ is a projective spectrum of LB-spaces with inclusions as linking maps and we can now form the following projective limit, called \textit{weighted PLB-space of holomorphic functions}
$$
AH(\mathbb{D}):=\proj{N}\mathcal{A}_NH(\mathbb{D})=\ind{n}\proj{N}Ha_{N,n}(\mathbb{D})
$$
which is the object of our study in this work. We refer the reader to the book of Wengenroth \cite{Wengenroth} for a detailed exposition of the theory of projective spectra of locally convex spaces $\mathcal{X}=(X_N)_{N\in\mathbb{N}}$, their projective limits $\proj{N}X_N$ and the derived functor $\Projeins$.
\smallskip
\\In this article we consider weights of the following special type; the definition of the so-called condition (LOG) is due to Bonet, Engli\v{s}, Taskinen \cite[4.1]{BoEnTa2005} and was used to prove a projective description for weighted LB-spaces of holomorphic functions. For every $\kappa\in\mathbb{N}$ we put $r_{\kappa}:=1-2^{-2^{\kappa}}$, $r_0:=0$ and $I_{\kappa}:=[r_{\kappa},r_{\kappa+1}]$. We say that the sequence $\mathcal{A}=((a_{N,n})_{N\in\mathbb{N}})_{n\in\mathbb{N}}$ satisfies condition (LOG) if each weight in the sequence is radial and approaches monotonically zero as $r\nearrow1$ and if there exist constants $0<a<1<A$ such that                                                                                           
$$
\text{(LOG 1)}\;\;A\cdot a_{N,n}(r_{\kappa+1})\geqslant a_{N,n}(r_{\kappa})\;\text{ and\; (LOG 2)}\;\;a_{N,n}(r_{\kappa+1})\leqslant a\cdot a_{N,n}(r_{\kappa})
$$
hold for all $N$, $n$ and $\kappa\in\mathbb{N}$. For our investigation we need the following well-known fact; for a proof we refer to \cite[Proof of Remark 1.1]{WegnerBET}: If $v$ is a radial weight which is decreasing on  $[0,1[$ and $(r_n)_{n\in\mathbb{N}}\subseteq[0,1[$ a sequence with $r_n\nearrow1$ as $n\rightarrow\infty$, then for $g\in Hv(\mathbb{D})$ we have $g_n\rightarrow g$ w.r.t.~co, where $g_n(z):=g(r_nz)$ for $z\in\mathbb{D}$.
\smallskip
\\Let us add the following explanatory comment on the meaning of condition (LOG). We assume that $v\colon\mathbb{D}\rightarrow\:]0,\infty[$ satisfies (LOG), put $K:=\{1/(1-r_{\kappa}) \:;\:\kappa\in\mathbb{N}\}$ and consider $w\colon K\rightarrow\,]0,\infty[$ with $w(1/(1-r_{\kappa}))=v(r_{\kappa})$. Then $v(0)(1/A)^{\log\log \nu}\leqslant w(\nu)\leqslant v(0)a^{\log\log\nu}$ holds for all $\nu\in K\backslash\{1\}$, where $\log$ denotes the binary logarithm. Therefore, $w(\nu)=\mathcal{O}(1/(\log \nu)^c)$ and $w(\nu)=\Omega(1/(\log \nu)^C)$ is valid for $\nu\rightarrow\infty$, where $C=-\log1/A>0$ and $c=-\log a>0$. In this sense, the weights satisfying (LOG) tend to zero logarithmically.

\section{Proj$^{\text{\,1}}$ and locally convex properties}\label{UaB}
\vspace{-10pt}

In this section we present necessary and sufficient conditions for the vanishing of $\Projeins\mathcal{A}H$ and $AH(\mathbb{D})$ being ultrabornological and barrelled. To formulate our results, we need the following notation due to Vogt \cite{Vogt1992}. We say that the sequence $\mathcal{A}$ satisfies condition (Q) if 
$$
\textstyle\forall \: N \; \exists \: M \geqslant N,\, n \; \forall \: K \geqslant M,\, m,\, \varepsilon > 0 \; \exists \: k, \, S>0 : \frac{1}{a_{M,m}} \leqslant \max\big(\frac{\varepsilon}{a_{N,n}},\frac{S}{a_{K,k}}\big),
$$
we say that it satisfies (wQ) if
$$
\textstyle\forall \: N \; \exists \: M \geqslant N,\, n \; \forall \: K \geqslant M,\, m \; \exists \: k, \, S>0 : \frac{1}{a_{M,m}} \leqslant \max\big(\frac{S}{a_{N,n}},\frac{S}{a_{K,k}}\big).
$$
Condition (Q) implies condition (wQ); the converse is not true, cf.~Bierstedt, Bonet \cite{BB1994}. We define condition $\text{(Q)}^{\sim}$ by the same quantifiers as in (Q) but the estimate replaced with $(a_{M,m}^{-1})^{\sim}\leqslant\max(\varepsilon a_{N,n}^{-1},S a_{K,k}^{-1})^{\sim}$, where for a given weight $a$, $(1/a)^{\sim}\colon \mathbb{D}\rightarrow \mathbb{R}$, $z\mapsto\sup\{|g(z)|\:;\:g\in H(\mathbb{D}),\,a|g|\leqslant 1\}$ is the associated growth condition and $\tilde{a}:=1/(1/a)^\sim$ the associated weight, cf.~Bierstedt, Bonet, Taskinen \cite{BBT}. We define $\text{(wQ)}^{\sim}$ by the same quantifiers as in (wQ) and the estimate replaced with $(a_{M,m}^{-1})^{\sim}\leqslant\max(\varepsilon(a_{N,n}^{-1})^{\sim},S(a_{K,k}^{-1})^{\sim})$.
\smallskip
\\Now we are able to present our first result;  its proof was inspired by the method developed in \cite[Section 4]{BoEnTa2005}, see also \cite{WegnerBET}.

\begin{thm}\label{LOG-main-result} Let $\mathcal{A}$ satisfy condition (LOG). Then we have the implications (i)$\Rightarrow$(ii) $\Rightarrow$(iii)$\Rightarrow$(iv)$\Rightarrow$(v), where
\vspace{5pt}
\\\begin{tabular}{rlrl}
(i)   & $\mathcal{A}$ satisfies condition $\text{(Q)}^{\sim}$,            & (iv) & $AH(\mathbb{D})$ is barrelled, \\
(ii)  & $\Projeins \mathcal{A}H=0$,                                                    & (v)  & $\mathcal{A}$ satisfies condition $\text{(wQ)}^{\sim}$. \\
(iii) & $AH(\mathbb{D})$ is ultrabornological,                                         &      & \\
\end{tabular}
\end{thm}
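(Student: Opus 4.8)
The plan is to obtain the two outer implications (i)$\Rightarrow$(ii) and (iv)$\Rightarrow$(v) by translating between the weight conditions and the homological/topological criteria for the spectrum $\mathcal{A}H$, while the inner implications (ii)$\Rightarrow$(iii)$\Rightarrow$(iv) follow from the general theory of $\Projeins$. First I would record, for the spectrum $\mathcal{A}H$ --- which is reduced since each $\mathcal{A}_NH(\mathbb{D})$ is a complete, hence regular, LB-space --- the tools from Vogt, see \cite[3.3.4 and 3.3.6]{Wengenroth}: the vanishing of $\Projeins$ is equivalent to a ball-interpolation estimate of the form
$$\forall\,N\;\exists\,M\geqslant N,n\;\forall\,K\geqslant M,m,\varepsilon>0\;\exists\,k,S>0:\;B_{M,m}\subseteq\varepsilon B_{N,n}+S B_{K,k}$$
(all balls read inside $H(\mathbb{D})$ via the linking inclusions); moreover $\Projeins\mathcal{A}H=0$ forces $AH(\mathbb{D})$ to be ultrabornological, and barrelledness of $AH(\mathbb{D})$ in turn yields the weaker estimate with $\varepsilon$ replaced by a constant (and a closure). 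Granting this, (ii)$\Rightarrow$(iii) is the cited theorem and (iii)$\Rightarrow$(iv) is the standard fact that ultrabornological spaces are barrelled, so only the two passages between balls and weights remain.

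For (i)$\Rightarrow$(ii) I would deduce the ball-interpolation estimate from $\text{(Q)}^{\sim}$. The bridge is the associated growth condition: since $(1/a_{N,n})^{\sim}(z)=\sup\{\,|g(z)|\;;\;g\in B_{N,n}\,\}$, one has $B_{N,n}=\{\,f\in H(\mathbb{D})\;;\;\tilde a_{N,n}|f|\leqslant 1\,\}$, so inclusions of balls of holomorphic spaces are governed by the tilded weights rather than by the $a_{N,n}$ --- this is exactly why the tilde must enter. The substantive step is to pass from the pointwise estimate $(1/a_{M,m})^{\sim}\leqslant[\max(\varepsilon/a_{N,n},S/a_{K,k})]^{\sim}$ to an honest splitting $f=g+h$ of a given $f\in B_{M,m}$ with $g\in\varepsilon B_{N,n}$ and $h\in SB_{K,k}$. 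Here I would follow the method of \cite[Section 4]{BoEnTa2005} (and \cite{WegnerBET}): use condition (LOG) to cut $f$ along the annuli $I_{\kappa}=[r_{\kappa},r_{\kappa+1}]$, whose weight quotients $a_{N,n}(r_{\kappa})/a_{N,n}(r_{\kappa+1})$ are pinned by (LOG) between the fixed bounds $1/a$ and $A$, and reassemble the pieces into two holomorphic functions. The projective-description identity relating $[\max(\varepsilon/a_{N,n},S/a_{K,k})]^{\sim}$ to the associated weight of the sum $\varepsilon B_{N,n}+SB_{K,k}$ is what makes the tilde sit on the \emph{outside} of the max in the sufficient condition.

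Conversely, (iv)$\Rightarrow$(v) is the easier translation. Barrelledness gives, via the general theory above, the weak ball estimate $B_{M,m}\subseteq\overline{S B_{N,n}+S B_{K,k}}$ with the quantifiers of (wQ). Testing it against the associated growth condition of the left-hand ball --- i.e.~splitting an arbitrary $g\in B_{M,m}$ as $g=g_1+g_2$ with $g_1\in SB_{N,n}$, $g_2\in SB_{K,k}$ and estimating $|g(z)|\leqslant|g_1(z)|+|g_2(z)|\leqslant S(1/a_{N,n})^{\sim}(z)+S(1/a_{K,k})^{\sim}(z)$ --- yields $(1/a_{M,m})^{\sim}\leqslant 2\max(S(1/a_{N,n})^{\sim},S(1/a_{K,k})^{\sim})$, which after absorbing the factor into $S$ is $\text{(wQ)}^{\sim}$. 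Note that here the tilde lands on each summand separately; this is precisely why $\text{(wQ)}^{\sim}$ carries the tilde \emph{inside} the max and is genuinely weaker than $\text{(Q)}^{\sim}$, and it is this asymmetry that keeps the chain from closing into equivalences without the extra hypotheses of Section \ref{FurtherAss}.

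The main obstacle is the holomorphic splitting in (i)$\Rightarrow$(ii): unlike for continuous functions one cannot multiply $f$ by a cut-off without destroying holomorphy, so the decomposition along the annuli must keep both pieces holomorphic while controlling their norms by $\varepsilon$ and $S$ respectively. Condition (LOG) is exactly the input that makes this quantitative construction go through, and the real care lies in getting the constants to match uniformly in $\kappa$ so that the resulting inclusion holds with the single pair $(\varepsilon,S)$ demanded by the estimate.
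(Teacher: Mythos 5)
Your architecture coincides with the paper's: (i)$\Rightarrow$(ii) is reduced to the ball condition $B_{M,m}\subseteq\varepsilon B_{N,n}+SB_{K,k}$ (the paper invokes the sufficiency theorem of Braun--Vogt \cite{BraunVogt1997}, not an equivalence), (ii)$\Rightarrow$(iii)$\Rightarrow$(iv) is quoted from the general theory, and (iv)$\Rightarrow$(v) is the associated-weight translation of Vogt's converse. Two points need repair.

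First, your claim that the spectrum is reduced \emph{since each $\mathcal{A}_NH(\mathbb{D})$ is a complete, hence regular, LB-space} is a non sequitur: reducedness is a density statement about the image of $AH(\mathbb{D})$ in the steps and does not follow from regularity of the individual LB-spaces. It is exactly at (iv)$\Rightarrow$(v) that reducedness is indispensable (Braun--Vogt needs no such hypothesis for (i)$\Rightarrow$(ii)), and the paper secures it from (LOG): the weights extend continuously to $\bar{\mathbb{D}}$ vanishing on $\partial\mathbb{D}$, so the polynomials lie in every $Ha_{N,n}(\mathbb{D})$ \cite{BBG}, whence $AH(\mathbb{D})$ is dense in each step; only then does \cite[Section 3]{WegnerJMAA} apply. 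Second, the heart of the theorem is the holomorphic splitting in (i)$\Rightarrow$(ii), and ``cut $f$ along the annuli and reassemble'' does not yet describe a construction --- as you yourself note, cut-offs are unavailable. The paper's mechanism is the telescoping series of dilates $g_\kappa(z)=f(r_{\kappa+1}z)-f(r_\kappa z)$, $g_0(z)=f(0)$, sorted into two partial sums $h_0,h_1$ according to which of $a_{N,n}/\varepsilon'$, $a_{K,k}/S'$ realizes $u:=\min(a_{N,n}/\varepsilon',a_{K,k}/S')$ at $r_\kappa$; for $|z|\in I_t$ the terms with $\kappa\leqslant t+1$ are controlled by (LOG 1)/(LOG 2) and a geometric series, the terms with $\kappa>t+1$ by a Cauchy estimate for $f'$ on the circle $|\eta|=r_\kappa$, and one passes to $f$ itself by letting $f_{r_{\nu+1}}\to f$ in co and using that $\varepsilon B_{N,n}+SB_{K,k}$ is co-compact. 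Since you defer precisely this step to \cite[Section 4]{BoEnTa2005}, your outline is faithful to the paper's route, but as written the proof of the key implication is not actually supplied.
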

\begin{proof}\textquotedblleft{}(i)$\Rightarrow$(ii)\textquotedblright{}\; In order to show that $\Projeins \mathcal{A}H=0$ holds, we use Braun, Vogt \cite[Theorem 8]{BraunVogt1997} -- which was obtained independently by Frerick, Wengenroth \cite{FrerickWengenroth1996}. That is, we have to verify condition $\text{(}\overline{\text{P}_{\!2}}\text{)}$
$$
\forall \: N \; \exists \: M,\,n \; \forall \: K,\,m,\,\varepsilon>0 \; \exists \: k,\,S>0\: \colon \: B_{M,m}\subseteq \varepsilon B_{N,n}+SB_{K,k}.
$$
We denote by $0<a<1<A$ the constants of (LOG 1) and (LOG 2) and put $B:=\max\big(\sum_{\kappa=0}^{\infty}a^{\kappa},\,\sup_{\kappa>t+2}2^{-\kappa}A^{\kappa-t}2^{-2^{\kappa-1}}\big)$. In addition, we put $T:=2A^2(B+A^2)+ 4(A^2+2B)$.
\smallskip
\\For given $N$ we select $M$ and $n$ as in $\text{(Q)}^{\sim}$. For given $K,\,m,\,\varepsilon>0$ we put $\varepsilon':=\frac{\varepsilon}{2T}$ and choose $k$ and $S'>0$ according to $\text{(Q)}^{\sim}$ w.r.t.~$\varepsilon'$ and put $S:=2TS'$. Now we fix an arbitrary $f\in B_{M,m}$. We have $|f|\leqslant\frac{1}{a_{M,m}}$, i.e.~with \cite[1.2.(iii)]{BBT} it follows $|f|\leqslant (\frac{1}{a_{M,m}})^{\sim}$. By the estimate in $\text{(Q)}^{\sim}$ we obtain $|f|\leqslant\max(\varepsilon'(\frac{1}{a_{N,n}}),\,S'(\frac{1}{a_{K,k}}))^{\sim}\leqslant\max(\frac{\varepsilon'}{a_{N,n}},\frac{S'}{a_{K,k}})$ where the last estimate follows from \cite[1.2.(i)]{BBT}. We put $u:=\min(\frac{a_{N,n}}{\varepsilon'},\,{\textstyle\frac{a_{K,k}}{S'}})$. Hence $\|f\|_u:=\sup_{z\in\mathbb{D}}u(z)|f(z)|\leqslant1$. By defining $u_0:=a_{N,n}$, $u_1:=a_{K,k}$, $a_0:=1/\varepsilon'$ and $a_1:=1/S'$ we get $u=\min(a_0u_0,\,a_1u_1)$. We put (according to \cite[Proof of 4.5]{BoEnTa2005}) $f_{r_{\kappa}}(z):=f(r_{\kappa}z)$ and obtain 
$f_{r_{\kappa}}\rightarrow f$ within the compact open topology (cf.~our remarks after the definition of (LOG) in Section \ref{Notation:sec}).
\medskip
\\Since all the weights in $\mathcal{A}$ are non-increasing, this is also true for $u$. Hence
$$
\text{(1)}\;\;\;\;\;\;\inf_{|z|\in I_{\kappa}}u(z)\:=\:u(r_{\kappa+1})\:\geqslant\:u(r_{\kappa+2})\:\,=\hspace{-5pt}\inf_{|z|\in I_{\kappa+1}}\!\!u(z)\!\!\stackrel{\scriptscriptstyle\text{(LOG 1)}}{\geqslant}\!\!A^{-2}u(r_{\kappa})
$$
holds. For every $\kappa$ in $\mathbb{N}$ we pick $i(\kappa)\in\{0,\,1\}$ such that
$$
\text{(2)}\;\;\;\;\;\;u(r_{\kappa})=a_{i(\kappa)}u_{i(\kappa)}(r_{\kappa})=a_{i(\kappa)}\sup_{|z|\in I_{\kappa}}u_{i(\kappa)}(z)
$$
is valid. For $\nu\in\mathbb{N}$ and $\ell\in\{0,\,1\}$ we define $N_{\ell}:=\{\,\kappa\in\mathbb{N}\:;\:\kappa\leqslant \nu\text{ and } i(\kappa)=\ell\,\}$. For each $\kappa\geqslant1$ we put $g_{\kappa}(z):=f(r_{\kappa+1}z)-f(r_{\kappa}z)$ and $g_0(z):=f(0)$. Finally, we define
$$
\text{(3)}\;\;\;\;\;\;h_{\ell}\,:=\!\Bigsum{\kappa\in N_{\ell}}{}g_{\kappa}
$$
for $\ell\in\{0,1\}$ and compute $f_{r_{\nu+1}}=g_0+h_0+h_1$. For the constant function $g_0$ we have $|g_0(z)|=|f(0)|=|f(r_0)|\leqslant a_{i(0)}^{-1}u_{i(0)}(0)^{-1}$ that is $a_{N,n}(z)|g_0(z)|\leqslant\varepsilon'\leqslant\frac{\varepsilon}{2}$ (if $i(0)=0$) or $a_{K,k}(z)|g_0(z)|\leqslant S'\leqslant{\textstyle\frac{S}{2}}$ (if $i(0)=1$). Now we fix $\ell\in\{0,1\}$, pick $\kappa\in N_{\ell}$ and estimate $|g_{\kappa}(z)|$ for different $z$.\vspace{3pt}
\begin{compactitem}
\item[1.] Assume first $|z|\geqslant r_{\kappa-1}$ (where we put $r_{\kappa-1}:=r_0$ for $\kappa=0$).\vspace{3pt}
\begin{compactitem}
\item[a.] Let $\kappa\geqslant2$. Then we have $|r_{\kappa}z|\geqslant(1-2^{-2^{\kappa}})(1-2^{-2^{\kappa-1}})\geqslant r_{\kappa-2}$ and thus $r_{\kappa-2}\leqslant|r_{\kappa}z|\leqslant |r_{\kappa+1}z|\leqslant r_{\kappa+1}$. Since $\|f\|_u\leqslant1$, we have $|f(z)|\leqslant u(z)^{-1}$ on $\mathbb{D}$. Since $u$ is non-increasing and by (1) we get the estimate $|g_{\kappa}(z)|\leqslant{}|f(r_{\kappa}z)|+|f(r_{\kappa+1}z)|\leqslant{}2\,\max\,(\,\sup_{r\in I_{\kappa-2}}u(r)^{-1},\:\sup_{r\in I_{\kappa-1}}u(r)^{-1},\linebreak{}\:\sup_{r\in I_{\kappa}}u(r)^{-1}) \leqslant 2u(r_{\kappa+1})^{-1}\leqslant 2 A^2 u(r_{\kappa})^{-1}= 2A^2 a_{\ell}^{-1}u_{\ell}(r_{\kappa})^{-1}$ where last equality follows since $u(r_{\kappa})=a_{i(\kappa)}u_{i(\kappa)}(r_{\kappa})$ and $\kappa\in N_{\ell}$ implies $i(\kappa)=\ell$ (cf.~(2)).\vspace{3pt}
\item[b.] Let $\kappa=1$. In this case we have $|g_1(z)|\leqslant |f(r_2z)|+|f(r_1z)|\leqslant2\linebreak{}\sup_{r_0\leqslant r\leqslant r_2}u(r)^{-1}\leqslant 2\max(\sup_{r\in I_0}u(r)^{-1},\,\sup_{r\in I_1}u(r)^{-1})=2u(r_2)^{-1}\leqslant2A^2a_{\ell}^{-1}u_{\ell}(r_1)^{-1}$\! where we use (1) for the last estimate.
\vspace{3pt}
\item[c.] Let $\kappa=0$. We have $|g_{\kappa}(z)|=|f(0)|$ and $\|f\|_u\leqslant1$ implies in particular $u(0)|f(0)|\leqslant1$, i.e.~
$
|g_{\kappa}(z)|=|f(0)|  \leqslant u(0)^{-1}= u(r_0)^{-1}=a_{i(0)}^{-1}u_{i(0)}(r_0)^{-1}\leqslant 2A^2a_{i(\kappa)}^{-1}u_{i(\kappa)}(r_{\kappa})^{-1}=2A^2a_{\ell}^{-1}u_{\ell}(r_{\kappa})^{-1}
$
by (2), since $A>1$ and by our selection $\kappa\in N_{\ell}$.\vspace{3pt}
\end{compactitem}
To sum up, in case 1.~we have
$$
\text{(4)}\;\;\;\;\;\;|g_{\kappa}(z)|\leqslant 2A^2 a_{\ell}^{-1}u_{\ell}(r_{\kappa})^{-1}
$$
for $|z|\geqslant r_{\kappa-1}$ and $\kappa\geqslant0$.\vspace{5pt}
\item[2.] Assume that $\kappa>t+1$ and $|z|\in I_t$, i.e.~$r_t\leqslant|z|\leqslant r_{t+1}$. We have $|g_{\kappa}(z)|=|f(r_{\kappa}z)-f(r_{\kappa+1}z)|$ by definition. By the mean value theorem there exists $\xi$ between $r_{\kappa}z$ and $r_{\kappa+1}z$ with $|f(r_{\kappa}z)-f(r_{\kappa+1})|=|f'(\xi)||r_{\kappa}z-r_{\kappa+1}z|\leqslant|f'(\xi)||r_{\kappa}-r_{\kappa+1}|$. Since $|r_{\kappa+1}-r_{\kappa}|\leqslant 2^{-2^{\kappa}}$ holds, the above yields $|g_{\kappa}(z)|\leqslant\sup_{r_{\kappa}r_t\leqslant|\xi|\leqslant r_{\kappa+1}r_{t+1}}|f'(\xi)|2^{-2^{\kappa}}$. Our assumption $t<\kappa-1$ implies $|\xi|\leqslant r_{\kappa+1}r_{t+1}< r_{t+1}\leqslant r_{\kappa}$ and we thus may use the Cauchy formula
$$
\text{(5)}\;\;\;\;\;\;|f'(\xi)|\leqslant{\textstyle\frac{1}{2\pi}}\int_{|\eta|=r_{\kappa}}{\textstyle\frac{|f(\eta)|}{|\eta-\xi|^2}}d\eta
$$
to estimate $|f'(\xi)|$. We have $|f(\eta)|\leqslant u(\eta)^{-1}=u(r_{\kappa})^{-1}$, since $\|f\|_u\leqslant1$ and $u$ is radial. For $|\eta|=r_{\kappa}$ and $\kappa>t+2$ straightforward computations show $\frac{1}{|\eta-\xi|^2}\leqslant4\cdot2^{2^{\kappa-1}}$, i.e.~$|f'(\xi)|\leqslant{\textstyle\frac{2\pi r_{\kappa}}{2\pi}}\cdot4\cdot2^{2^{\kappa-1}}u(r_{\kappa})^{-1}\leqslant 4\cdot2^{2^{\kappa-1}}u(r_{\kappa})^{-1}$ holds by (5) and we get 
$|g_{\kappa}(z)|\leqslant4\cdot2^{2^{\kappa-1}}a_{\ell}^{-1}u_{\ell}(r_{\kappa})^{-1}$. If $\kappa=t+2$, similar computations show $|g_{\kappa}(z)|\leqslant 4a_{\ell}^{-1}u_{\ell}(r_{\kappa})^{-1}$. Now we use (LOG 1) $(\kappa-t)-$times to obtain $u_{\ell}(r_t)\leqslant Au_{\ell}(r_{\kappa+1})\leqslant A^2u_{\ell}(r_{t+2})\leqslant\cdots\leqslant A^{\kappa-t}u_{\ell}(r_{t+\kappa-t})\leqslant A^{\kappa-t}u_{\ell}(r_{\kappa})$. Since $|z|\geqslant r_t$ and since $u_{\ell}$ is radial and decreasing for $r\nearrow 1$ we have $u_{\ell}(r_t)\geqslant u_{\ell}(z)$ and thus we get $u_{\ell}(z)\leqslant u_{\ell}(r_{t})\leqslant A^{\kappa-t}u_{\ell}(r_{\kappa})$, which finally yields $u_{\ell}(r_{\kappa})^{-1}\leqslant A^{\kappa-t}u_{\ell}(z)^{-1}$. For $\kappa>t+2$ we get $|g_{\kappa}(z)|\leqslant 4a_{\ell}^{-1}u_{\ell}(z)^{-1}A^{\kappa-t}2^{-2^{\kappa-1}}$ from the latter and thus $|g_{\kappa}(z)|\leqslant4\cdot2^{-\kappa}Ba_{\ell}^{-1}u_{\ell}(z)^{-1}$ by our selection of $B$. If $\kappa=t+2$ we get $|g_{\kappa}(z)
|\leqslant 4a_{\ell}^{-1}u_{\ell}(z)^{-1}A^{2}$.
\smallskip
To sum up, in case 2.~we have
$$
\text{(6)}\;\;\;\;\;\;|g_{\kappa}(z)|\leqslant 4a_{\ell}^{-1}u_{\ell}(z)^{-1}\,
\begin{cases}
\;2^{-\kappa}B & \text{if }\;\kappa>t+2\\
\;\;\,A^{2} & \text{if }\;\kappa=t+2
\end{cases}
$$
for $|z|\in I_t$ and $\kappa$ as indicated above.
\end{compactitem}
To complete the proof, let now $z\in\mathbb{D}$ be arbitrary. We select $t\in\mathbb{N}$ such that $|z|\in I_t=[r_t,r_{t+1}]$. Then
$$
\text{(7)}\;\;\;\;|h_{\ell}(z)|\,\stackrel{\text{\tiny dfn}}{=}\,\big|\!\!\Bigsum{\kappa\in N_{\ell}}{}\!g_{\kappa}(z)\hspace{0.5pt}\big|\,\leqslant\!\Bigsum{\stackrel{\kappa\in N_{\ell}}{\scriptscriptstyle\kappa\leqslant t+1}}{}\hspace{-2pt}|g_{\kappa}(z)|\:+\!\!\Bigsum{\stackrel{\kappa\in N_{\ell}}{\scriptscriptstyle\kappa>t+1}}{}\hspace{-2pt}|g_{\kappa}(z)|\:=:\:G_{\ell}(z)+H_{\ell}(z).
$$
\begin{compactitem}
\item[(i)] Consider $G_{\ell}(z)$, that is all occurring $\kappa$ satisfy $0\leqslant\kappa\leqslant t+1$ and $\kappa\in N_{\ell}$. Thus we have $\kappa-1\leqslant t$, hence $|z|\geqslant r_t\geqslant r_{\kappa-1}$ (remember that we defined $r_{-1}:=r_0=0$). By (4) we therefore have
$$
\text{(8)}\;\;\;\;\;\;G_{\ell}(z)\,\stackrel{\text{\tiny dfn}}{=}\!\!\Bigsum{\stackrel{\kappa\in N_{\ell}}{\scriptscriptstyle\kappa\leqslant t+1}}{}\hspace{-2pt}|g_{\kappa}(z)|\:\leqslant\!\Bigsum{\stackrel{\kappa\in N_{\ell}}{\scriptscriptstyle\kappa\leqslant t+1}}{}\hspace{-1.5pt}2A^2a_{\ell}^{-1}u_{\ell}(r_{\kappa})^{-1}.
$$
(LOG 2) implies $u_{\ell}(r_{\kappa+1})\leqslant a u_{\ell}(r_{\kappa})$, i.e.~$u_{\ell}(r_{\kappa})^{-1}\leqslant a u_{\ell}(r_{\kappa+1})^{-1}$ for arbitrary $\kappa$. Iterating this estimate $t-\kappa$ times for a fixed $\kappa\leqslant t$ we get $u_{\ell}(r_{\kappa})^{-1}\leqslant a u_{\ell}(r_{\kappa+1})^{-1}\leqslant\cdots\leqslant a^{t-\kappa}u_{\ell}(r_{\kappa+t-\kappa})^{-1}=a^{t-\kappa}u_{\ell}(r_{t})^{-1}$.
With the latter we get from (8) 
\begin{align*}
G_{\ell}(z)
& \leqslant 2A^2a_{\ell}^{-1}\big(\Bigsum{\kappa=0}{t}u_{\ell}(r_{\kappa})^{-1} + u_{\ell}(r_{t+1})^{-1}\big) \\
&\leqslant 2A^2a_{\ell}^{-1}\big(\Bigsum{\kappa=0}{t}a^{t-\kappa}u_{\ell}(r_t)^{-1} + A^2u_{\ell}(r_{t})^{-1}\big) \\
&= 2A^2a_{\ell}^{-1}u_{\ell}(r_t)^{-1}\big(\Bigsum{\sigma=0}{t}a^{\sigma} + A^2\big) \\
&\leqslant 2A^2(B+A^2)a_{\ell}^{-1}u_{\ell}(z)^{-1}
\end{align*}
where we used that $B>\Bigsum{\kappa\in\mathbb{N}}{}a^{\kappa}$, that $u_{\ell}$ is radial and decreasing for $r\nearrow1$ and $|z|\geqslant r_t$, whence $u_{\ell}(r_t)^{-1}\leqslant u_{\ell}(z)^{-1}$.\vspace{3pt}
\item[(ii)] Consider $H_{\ell}(z)$. Then all the occurring $\kappa$ satisfy $\kappa>t+1$ and $\kappa\in N_{\ell}$. By (6) we obtain
\begin{align*}
\;\;\;\;\;\;\;H_{\ell}(z)\,\stackrel{\text{\tiny dfn}}{=}\hspace{-1.8pt}\Bigsum{\stackrel{\kappa\in N_{\ell}}{\scriptscriptstyle\kappa>t+1}}{}\hspace{-2pt}|g_{\kappa}(z)|\;&=\;\delta_{i(t+2),\ell}\,|g_{t+2}|\:+\hspace{-2pt}\Bigsum{\stackrel{\kappa\in N_{\ell}}{\scriptscriptstyle\kappa>t+2}}{}\hspace{-1.5pt}|g_{\kappa}(z)|\\
&\leqslant\;4a_{\ell}^{-1}u_{\ell}(z)^{-1}A^2\,+\hspace{-2pt}\Bigsum{\stackrel{\kappa\in N_{\ell}}{\scriptscriptstyle\kappa>t+2}}{}\!4\cdot2^{-\kappa}Ba_{\ell}^{-1}u_{\ell}(z)^{-1}\\
&\leqslant\;\big(4A^2+4B\Bigsum{\kappa=0}{\infty}2^{-\kappa}\big)a_{\ell}^{-1}u_{\ell}(z)^{-1}\\
&=\;4(A^2+2B)a_{\ell}^{-1}u_{\ell}(z)^{-1},
\end{align*}
where $\delta$ denotes the Kronecker symbol.\vspace{3pt}
\end{compactitem}
Combining the estimates in (i) and (ii) we obtain $|h_{\ell}(z)|\leqslant G_{\ell}(z)+H_{\ell}(z)\leqslant(2A^2(B+A^2)+ 4(A^2+2B))a_{\ell}^{-1}u_{\ell}(z)^{-1}=Ta_{\ell}^{-1}u_{\ell}(z)^{-1}$ from (7), that means $u_{\ell}(z)|h_{\ell}(z)|\leqslant Ta_{\ell}^{-1}$ for each $z\in\mathbb{D}$ and $\ell=0,\,1$. By the definition of $u_{\ell}$ and $a_{\ell}$ this means $a_{N,n}(z)|h_0(z)|=u_0(z)|h_0(z)|\leqslant T\varepsilon'\leqslant{\textstyle\frac{\varepsilon}{2}}$ and $a_{K,k}(z)|h_1(z)|=u_1(z)|h_1(z)|\leqslant TS'\leqslant{\textstyle\frac{S}{2}}$ for each $z\in\mathbb{D}$. Hence $h_0\in \frac{\varepsilon}{2}B_{N,n}$ by the first estimate and $h_1\in \frac{S}{2}B_{K,k}$ by the second estimate, i.e.~$f_{r_{\nu+1}}=g_0+h_0+h_1\in{\textstyle\frac{\varepsilon}{2}}B_{N,n}+{\textstyle\frac{S}{2}}B_{K,k}+{\textstyle\frac{\varepsilon}{2}}B_{N,n}+{\textstyle\frac{S}{2}}B_{K,k}\subseteq\varepsilon B_{N,n} + S B_{K,k}$. Since $\varepsilon B_{N,n} + S B_{K,k}$ is co-compact, $f\in \varepsilon B_{N,n} + S B_{K,k}$ follows 
from our remark after the definition of (LOG) and we are done.
\smallskip
\\\textquotedblleft{}(ii)$\Rightarrow$(iii)$\Rightarrow$(iv)\textquotedblright{}\; The first implication holds in general, see Wengenroth \cite[3.3.4]{Wengenroth} (cf.~Vogt \cite[5.7]{VogtLectures}). The second also holds in general, see e.g.~\cite{MeiseVogtEnglisch}.
\smallskip
\\\textquotedblleft{}(iv)$\Rightarrow$(v)\textquotedblright{}\; The assumptions of (LOG) imply that the topology of $Ha_{N,n}(\mathbb{D})$ is stronger than co and that the polynomials are contained in the latter space for any $N$ and $n$; by Bierstedt, Bonet, Galbis \cite[remark previous to 1.2]{BBG} the second statement is equivalent to requiring that each weight $a_{N,n}$ extends continuously to $\bar{\mathbb{D}}$ with $a_{N,n}|_{\partial\mathbb{D}}=0$. Therefore, the statement follows from \cite[Section 3]{WegnerJMAA}.
\end{proof}

\section{Interchangeability of projective and inductive limit}\label{Inter}
\vspace{-10pt}

Given a sequence of weights $\mathcal{A}=((a_{N,n})_{N\in\mathbb{N}})_{n\in\mathbb{N}}$ on the unit disc we can -- in addition to the PLB-space investigated in the preceding section -- also associate a \textit{weighted LF-space of holomorphic functions} by defining
\begin{align*}
\mathcal{V}H(\mathbb{D})&:=\ind{n}\proj{N}Ha_{N,n}(\mathbb{D}).
\end{align*}
This space constitutes the holomorphic version of the weighted LF-space of continuous functions investigated by Bierstedt, Bonet \cite{BB1994} and has been studied by several authors in different contexts, cf.~the survey \cite{Bierstedt2001} of Bierstedt for detailed references. In \cite[Section 3]{WegnerJMAA} the author showed that $\mathcal{V}H(\mathbb{D})\subseteq AH(\mathbb{D})$ holds in general with continuous inclusion. In order to investigate when $\mathcal{V}H(\mathbb{D})= AH(\mathbb{D})$ holds algebraically, in \cite{WegnerJMAA} the following condition based on research of Vogt \cite[1.1]{Vogt1983} was used: We say that a sequence $\mathcal{A}$ as above satisfies condition (B) if
$$
\forall \: (n(N))_{N\in\mathbb{N}}\subseteq \mathbb{N} \; \exists \: m \; \forall \: M \; \exists \: L,\, c>0 \colon a_{M,m} \, \leqslant \, c \max_{\scriptscriptstyle N=1,\dots,L} a_{N,n(N)}.
$$
Condition $\text{(B)}^{\sim}$ is defined by the same quantifiers and the estimate replaced by $\tilde{a}_{M,m} \, \leqslant \, c (\max_{\scriptscriptstyle N=1,\dots,L} a_{N,n(N)})^{\sim}$. Now we are ready to state the result on interchangeability of projective and inductive limit.

\begin{thm}\label{Int-result}Let $\mathcal{A}$ satisfy condition (LOG).
\begin{compactitem}\vspace{2pt}
\item[(i)] $AH(\mathbb{D})=\mathcal{V}H(\mathbb{D})$ holds algebraically if and only if $\mathcal{A}$ satisfies $\text{(B)}^{\sim}$.\vspace{3pt}
\item[(ii)] If $AH(\mathbb{D})=\mathcal{V}H(\mathbb{D})$ holds algebraically and topologically then $\mathcal{A}$ satisfies the conditions $\text{(B)}^{\sim}$ and $\text{(wQ)}^{\sim}$.\vspace{3pt}
\item[(iii)] If $\mathcal{A}$ satisfies the conditions $\text{(B)}^{\sim}$ and $\text{(Q)}^{\sim}$ then $AH(\mathbb{D})=\mathcal{V}H(\mathbb{D})$ holds algebraically and topologically.
\end{compactitem}
\end{thm}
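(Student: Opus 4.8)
The plan is to prove (i) first and then obtain (ii) and (iii) by combining (i) with Theorem \ref{LOG-main-result} and an open mapping argument. Throughout I use the general inclusion $\mathcal{V}H(\mathbb{D})\subseteq AH(\mathbb{D})$ with continuous inclusion established in \cite[Section 3]{WegnerJMAA}; hence the identity $\iota\colon\mathcal{V}H(\mathbb{D})\to AH(\mathbb{D})$ is a continuous injection and \emph{algebraic} equality is equivalent to the reverse set-theoretic inclusion $AH(\mathbb{D})\subseteq\mathcal{V}H(\mathbb{D})$. The bridge between the two descriptions is the fundamental property of the associated weight, $\|f\|_{N,n}=\sup_{z}a_{N,n}(z)|f(z)|=\sup_{z}\tilde a_{N,n}(z)|f(z)|$ valid for every $f\in H(\mathbb{D})$, which lets me translate boundedness in the holomorphic weighted norms into pointwise estimates for the associated weights.

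For the sufficiency in (i) I would start from $f\in AH(\mathbb{D})$. As each $\mathcal{A}_NH(\mathbb{D})=\ind{n}Ha_{N,n}(\mathbb{D})$ is a regular LB-space, for every $N$ there is $n(N)$ with $f\in Ha_{N,n(N)}(\mathbb{D})$. I then feed precisely this sequence $(n(N))_N$ into $\text{(B)}^{\sim}$ to produce a single level $m$. The crucial observation is the pointwise bound: putting $v:=\max_{N\leqslant L}a_{N,n(N)}$ and $D:=\max_{N\leqslant L}\|f\|_{N,n(N)}$ one has $v(z)|f(z)|\leqslant D$ for all $z$, so $f/D$ competes in the supremum defining the growth condition $(1/v)^{\sim}$; hence $|f|/D\leqslant(1/v)^{\sim}$, i.e.\ $\sup_z(\max_{N\leqslant L}a_{N,n(N)})^{\sim}(z)|f(z)|\leqslant D$. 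Together with the estimate $\tilde a_{M,m}\leqslant c\,(\max_{N\leqslant L}a_{N,n(N)})^{\sim}$ furnished by $\text{(B)}^{\sim}$ and the fundamental property this yields $\|f\|_{M,m}=\sup_z\tilde a_{M,m}(z)|f(z)|\leqslant cD<\infty$ for every $M$, so that $f\in\proj{N}Ha_{N,m}(\mathbb{D})\subseteq\mathcal{V}H(\mathbb{D})$.

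The converse implication in (i) is where I expect the real work, and I would argue by contraposition. If $\text{(B)}^{\sim}$ fails, a sequence $(n(N))_N$ is singled out such that for every level $m$ there is $M=M(m)$ with $\sup_z\tilde a_{M,m}(z)/(\max_{N\leqslant L}a_{N,n(N)})^{\sim}(z)=\infty$ for all $L$. By definition of the growth condition this produces, for each $m$, holomorphic functions satisfying $\max_{N\leqslant L_m}a_{N,n(N)}\,|g_m|\leqslant1$ yet with $\tilde a_{M(m),m}(z_m)|g_m(z_m)|$ arbitrarily large at suitable points $z_m\nearrow\partial\mathbb{D}$. The task is then to assemble a single function $f=\sum_m\lambda_m g_m$, with scalars $\lambda_m$ chosen so that the series converges w.r.t.~co and $f\in Ha_{N,n(N)}(\mathbb{D})$ for every $N$ (whence $f\in AH(\mathbb{D})$), while $\|f\|_{M(m),m}=\infty$ survives for each $m$ (whence $f\notin\mathcal{V}H(\mathbb{D})$). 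Here condition (LOG) is essential: the geometric sampling points $r_{\kappa}=1-2^{-2^{\kappa}}$ and the two-sided control (LOG 1)/(LOG 2) let me localise the $g_m$ near pairwise far-apart radii and dominate the finitely many uncontrolled terms for each fixed $N$, exactly in the spirit of the constructions in \cite[Section 4]{BoEnTa2005} and \cite{WegnerBET}. Making this superposition precise --- in particular guaranteeing simultaneous membership in all $Ha_{N,n(N)}(\mathbb{D})$ without destroying the blow-up --- is the main obstacle.

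Finally, (ii) and (iii) follow by combining (i) with the abstract results. In (ii), algebraic equality gives $\text{(B)}^{\sim}$ by (i); moreover $\mathcal{V}H(\mathbb{D})=\ind{n}\proj{N}Ha_{N,n}(\mathbb{D})$ is an LF-space and therefore barrelled, so topological equality forces $AH(\mathbb{D})$ to be barrelled, whence $\text{(wQ)}^{\sim}$ by the implication (iv)$\Rightarrow$(v) of Theorem \ref{LOG-main-result}. In (iii), $\text{(B)}^{\sim}$ yields algebraic equality by (i), so $\iota\colon\mathcal{V}H(\mathbb{D})\to AH(\mathbb{D})$ is a continuous linear bijection; since $\mathcal{V}H(\mathbb{D})$ is an LF-space it is webbed, while $\text{(Q)}^{\sim}$ makes $AH(\mathbb{D})$ ultrabornological by the implication (i)$\Rightarrow$(iii) of Theorem \ref{LOG-main-result}. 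The inverse $\iota^{-1}\colon AH(\mathbb{D})\to\mathcal{V}H(\mathbb{D})$ has closed graph, because both topologies are finer than co and co is Hausdorff, so De Wilde's closed graph theorem applies and shows $\iota^{-1}$ continuous; hence $\iota$ is a topological isomorphism, i.e.\ the two spaces agree algebraically and topologically.
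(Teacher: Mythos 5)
Your treatment of (ii) and (iii) coincides with the paper's: (ii) is exactly ``combine (i) with Theorem \ref{LOG-main-result}, using that the LF-space $\mathcal{V}H(\mathbb{D})$ is barrelled,'' and for (iii) the paper applies the open mapping theorem of \cite[24.30]{MeiseVogtEnglisch} to the continuous bijection $\iota\colon\mathcal{V}H(\mathbb{D})\to AH(\mathbb{D})$ (webbed domain, ultrabornological codomain), which is the same argument as your closed-graph formulation for $\iota^{-1}$. Your sufficiency proof in (i) is also sound: regularity of the steps $\mathcal{A}_NH(\mathbb{D})$ produces the sequence $(n(N))_N$, and the fundamental identity $\sup_z a(z)|f(z)|=\sup_z\tilde a(z)|f(z)|$ from \cite[1.2]{BBT} converts the $\text{(B)}^{\sim}$-estimate into $\|f\|_{M,m}<\infty$ for all $M$.

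The genuine gap is the necessity direction of (i). You correctly identify that negating $\text{(B)}^{\sim}$ yields, for each $m$, competitors $g$ with $\max_{N\leqslant L}a_{N,n(N)}|g|\leqslant 1$ but $\tilde a_{M(m),m}|g|$ unbounded, and that one must superpose these into a single $f\in AH(\mathbb{D})\setminus\mathcal{V}H(\mathbb{D})$; but you explicitly leave the construction --- in particular the simultaneous membership of $f$ in every $Ha_{N,n(N)}(\mathbb{D})$ while the blow-up at level $(M(m),m)$ survives --- as an ``obstacle'' rather than carrying it out. This is precisely the hard content of the equivalence, and as written the proof of (i) is incomplete. Note also that your instinct to lean on (LOG) here is misdirected: the paper does not prove this direction via a (LOG)-based construction at all, but simply invokes \cite[Section 5]{WegnerJMAA}, where the equivalence ``$AH(\mathbb{D})=\mathcal{V}H(\mathbb{D})$ algebraically $\Leftrightarrow$ $\text{(B)}^{\sim}$'' is established in a more general setting (arbitrary balanced domains, no (LOG) hypothesis). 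So either cite that result, or supply the full superposition argument; the sketch as it stands does not yet constitute a proof.
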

\begin{proof}(i) As in the proof of the implication \textquotedblleft{}(iv)$\Rightarrow$(v)\textquotedblright{} of Theorem \ref{LOG-main-result} the statement is even true in a more general setting, see \cite[Section 5]{WegnerJMAA}.
\smallskip
\\(ii) This follows directly from (i) and Theorem \ref{LOG-main-result}.
\smallskip
\\(iii) Let $\text{(B)}^{\sim}$ and $\text{(Q)}^{\sim}$ be satisfied. By (i) the identity $\mathcal{V}H(\mathbb{D})\rightarrow AH(\mathbb{D})$ is one-to-one and as we noted above it is continuous. Since $AH(\mathbb{D})$ is ultrabornological by Theorem \ref{LOG-main-result} and $\mathcal{V}H(\mathbb{D})$ is webbed, we can apply the open mapping theorem (cf.~Meise, Vogt \cite[24.30]{MeiseVogtEnglisch}) and are done.
\end{proof}

\section{Further assumptions on the defining double sequence and some remarks on examples}\label{FurtherAss}
\vspace{-10pt}

In the previous sections we established necessary and sufficient conditions for the vanishing of $\Projeins$, for $AH(\mathbb{D})$ being ultrabornological and barrelled and also for the interchangeability of projective and inductive limit. Unfortunately, in almost all results the necessary and the sufficient conditions are distinct. Therefore it is desirable to identify additional general assumptions on the sequence $\mathcal{A}$ which allow to prove characterizations. In the sequel we present a setting in which this is possible and in which we in addition get by on associated weights which is -- in view of the complexity of our conditions -- clearly also desirable. In order to do so, we need the following condition $(\Sigma)$, which was introduced by Bierstedt, Bonet \cite[Section 5]{BB1994} and constitutes a generalization of condition (V) invented by Bierstedt, Meise, Summers \cite{BMS1982}. We say that a double sequence $\mathcal{A}=((a_{N,n})_{N\in\mathbb{N}})_{n\in\mathbb{N}}$ on $\mathbb{D}$ satisfies 
condition $(\Sigma)$ if
$$
\textstyle\forall\:N\;\exists\: K\geqslant N\:\forall\:k\;\exists\:n\geqslant k\colon \frac{a_{N,n}}{a_{K,k}} \text{ vanishes at } \infty \text{ on } \mathbb{D}.
$$
Moreover, let us -- according to Taskinen \cite{Taskinen2001} -- call a weight $a$ essential, if there exists $C>0$ such that $(1/a)^{\sim}\leqslant 1/a \leqslant C(1/a)^{\sim}$ holds. Following the lines of \cite{WegnerJMAA} we obtain the last result of this note which should be compared with the results explained in \cite{ABB2009}.

\begin{thm}\label{TopologicalResultsSigma}Let $\mathcal{A}$ satisfy the conditions (LOG) and $\text{(}{\mathit{\Sigma}}\text{)}$.\vspace{2pt}
\begin{compactitem}
\item[(1)]If all weights in $\mathcal{A}$ are essential, then the following are equivalent.\vspace{3pt}
\\\begin{tabular}{rlrl}\vspace{1pt}
(i)  & $\mathcal{A}$ satisfies (Q).                              &(iv)  & $AH(\mathbb{D})$ is barrelled. \\\vspace{1pt}
(ii) & $\Projeins \mathcal{A}H=0$.                               &(v)  & $\mathcal{A}$ satisfies (wQ). \\\vspace{1pt}
(iii)  & $AH(\mathbb{D})$ is ultrabornological.                    &      &
\end{tabular}\vspace{2pt}
\item[(2)]If $\mathcal{A}$ is contained in a set of essential weights which is closed under finite maxima, then the following are equivalent.\vspace{3pt}
\\\begin{tabular}{rlrl}\vspace{1pt}
(i)  & $\mathcal{A}$ satisfies (Q) and (B).\hspace{18.5pt} & (iii)   & $\mathcal{A}$ satisfies (wQ) and (B).\\\vspace{1pt}
\end{tabular}\vspace{-13pt}
\\\begin{tabular}{rl}
\hspace{-3pt}(ii) & $AH(\mathbb{D})=\mathcal{V}H(\mathbb{D})$ holds algebraically and topologically.
\end{tabular}
\end{compactitem}
\end{thm}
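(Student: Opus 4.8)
The plan is to obtain both parts by feeding the outputs of Theorems \ref{LOG-main-result} and \ref{Int-result} through two reductions. The essentiality hypotheses will be used to identify each associated-weight condition with its non-tilde counterpart, while condition $(\Sigma)$ will be used to collapse the weak condition (wQ) onto the strong condition (Q). Since (Q), (wQ) and (B) are requirements on the double sequence $\mathcal{A}$ alone, I may invoke the weight-theoretic equivalence of (wQ) and (Q) under $(\Sigma)$ from the continuous-function theory of Agethen, Bierstedt, Bonet \cite{ABB2009} (where $(\Sigma)$ originates with Bierstedt, Bonet \cite{BB1994}) without any change.

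First I would carry out the passage between the tilde and non-tilde conditions. The operation $(\cdot)^{\sim}$ is monotone and satisfies the universally valid inequality $(w)^{\sim}\leqslant w$, while essentiality of a weight $a$ furnishes the reverse bound $1/a\leqslant C(1/a)^{\sim}$ for a fixed $C>0$. Using monotonicity to pass from a non-tilde inequality to its tilde-version, and the two pointwise bounds to pass back, one identifies (Q)$\Leftrightarrow\text{(Q)}^{\sim}$ and (wQ)$\Leftrightarrow\text{(wQ)}^{\sim}$, the constants being absorbed into the quantified parameter $S$ (and, for (Q), into the universally quantified $\varepsilon$). For (B)$\Leftrightarrow\text{(B)}^{\sim}$ one additionally needs each $\max_{N=1,\dots,L}a_{N,n(N)}$ to be essential, which is exactly guaranteed by the assumption in (2) that the family be closed under finite maxima; here the constant is absorbed into $c$.

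With these equivalences in hand, part (1) is immediate from Theorem \ref{LOG-main-result}: its chain (i)$\Rightarrow$(ii)$\Rightarrow$(iii)$\Rightarrow$(iv)$\Rightarrow$(v) becomes, after replacing the tilde conditions by their essential equivalents, the chain (Q)$\Rightarrow$(ii)$\Rightarrow$(iii)$\Rightarrow$(iv)$\Rightarrow$(wQ) in the present numbering, and it remains only to close the cycle by (wQ)$\Rightarrow$(Q). For part (2) I would combine the same reduction with Theorem \ref{Int-result}: implication (iii) of that theorem turns (Q) and (B) into the algebraic and topological identity $AH(\mathbb{D})=\mathcal{V}H(\mathbb{D})$, implication (ii) turns this identity into (wQ) and (B), and the return from (wQ) and (B) to (Q) and (B) is again the equivalence (wQ)$\Leftrightarrow$(Q) furnished by part (1).

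The hard part will be the implication (wQ)$\Rightarrow$(Q) under $(\Sigma)$, since in general (wQ) is strictly weaker than (Q). Although this is a statement about $\mathcal{A}$ only and is therefore formally detached from the holomorphic structure, the crux is to verify that the present form of $(\Sigma)$ --- vanishing of each admissible quotient $a_{N,n}/a_{K,k}$ at $\infty$ on $\mathbb{D}$ --- is the regularly decreasing mechanism that supplies both the extra index $n$ and the control over arbitrary $\varepsilon>0$ demanded by (Q) but not by (wQ). By contrast, the bookkeeping in the tilde reduction, above all the tracking of the finite maxima in (B) and of the $\varepsilon$-quantifier in (Q), is routine, and I would record it only to the extent needed to fix the constants.
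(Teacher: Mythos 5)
Your proposal is correct and follows exactly the route the paper intends (the paper itself gives no proof beyond the remark ``following the lines of \cite{WegnerJMAA}''): use essentiality (and, for (B), closure under finite maxima) to drop the tildes from $\text{(Q)}^{\sim}$, $\text{(wQ)}^{\sim}$, $\text{(B)}^{\sim}$, invoke the purely weight-theoretic implication $\text{(wQ)}\Rightarrow\text{(Q)}$ under $(\Sigma)$ from \cite{ABB2009}/\cite{BB1994}, and close the cycles via Theorems \ref{LOG-main-result} and \ref{Int-result}. You correctly identify which direction of each tilde equivalence is free (monotonicity of the associated weight) and which needs the essentiality constant, and your appeal to the $(\Sigma)$-lemma is legitimate since (Q), (wQ) and $(\Sigma)$ are conditions on $\mathcal{A}$ alone, independent of the holomorphic setting.
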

\noindent{}To conclude our investigation let us remark that examples for PLB-spaces can be constructed (according to the result \cite[7.2]{WegnerJMAA} of Bonet) as follows: Put $a_{N,n}(z)=a(|z|)^{\alpha_n}v(|z|)^{\beta_N}$ with $a$, $v\colon[0,1]\rightarrow\;]0,1]$ continuous, decreasing with $\lim_{r\nearrow1}a(r)=\lim_{r\nearrow1}v(r)=0$, $\alpha_n\nearrow\alpha\in\;]0,\infty]$ and $\beta_N\searrow\beta\in[0,\infty[$. If now the conditions (df) and (m) of \cite[7.2]{WegnerJMAA}
are both \textit{not} satisfied, then $AH(\mathbb{D})$ is a (proper) PLB-space. The latter is valid if for instance $\limsup_{r\nearrow1}\log v(r)/\log a(r)=\limsup_{r\nearrow1}\log a(r)/\log v(r)$ $=\infty$ holds. Sequences with the forementioned property can be constructed following the lines of Bierstedt, Bonet \cite[Claim on p.~765]{BB2006}.

\vspace{-5pt}

\subsection*{Acknowledgment}
\vspace{-10pt}

This article arises from a part of the author's doctoral thesis, which was started at the University of Paderborn under the direction of Klaus D.~Bierstedt and after his sudden death  finished under the direction of Jos\'{e} Bonet at the Universidad Polit\'{e}cnica de Valencia. The author is greatly indebted to both of his supervisors; in addition he likes to thank Thomas Sauerwald for some clarifying discussions on the asymptotic interpretation of condition (LOG). Finally, the author likes to thank the referee for pointing out some mistakes in the submitted manuscript.

\setlength{\parskip}{0cm}

\small

\end{document}